\definecolor{linkblue}{RGB}{1,1,190}
\definecolor{citered}{RGB}{190,1,1}  
\newtheorem{theorem}{Theorem}
\newtheorem{lemma}[theorem]{Lemma}
\newtheorem{proposition}[theorem]{Proposition}
\newtheorem{corollary}[theorem]{Corollary}
\theoremstyle{definition}
\newtheorem{example}[theorem]{Example}
\newtheorem{remark}[theorem]{Remark}
\DeclareMathOperator{\ord}{ord}
\DeclareMathOperator{\aut}{Aut} 
\DeclareMathOperator{\fin}{fin}
\DeclareMathOperator{\Dih}{Dih}
\def\P{\ensuremath\mathcal{P}}
\def\Pf{\ensuremath\mathcal{P}_{0}}
\def\O{\ensuremath\mathcal{O}}
\title[The automorphism group of reduced power monoids of finite abelian groups]{The automorphism group of \\ reduced power monoids of finite abelian groups}
\author{Balint Rago}
\address{University of Graz, NAWI Graz, Department of Mathematics and Scientific Computing, Heinrichstraße 36,
8010 Graz, Austria}
\thanks{This work was supported by the FWF (projects W1230 and  10.55776/DOC-183-N)}
\email{balint.rago@uni-graz.at}
\subjclass[2020]{08A35,\ 20M14}
\keywords{automorphism group, power monoid}
\begin{document}

\maketitle

\begin{abstract}
    Let $H$ be an additively written monoid and let $\Pf(H)$ denote the reduced power monoid of $H$, that is, the monoid consisting of all subsets of $H$ containing $0$ with set addition as operation. Following work of Tringali, Wen and Yan, we give a full description of the automorphism group of $\Pf(G)$, where $G$ is a finite abelian group. More precisely, we show that $\aut(\Pf(G))$ and $\aut(G)$ are isomorphic in a canonic way, except in the special case when $G$ is isomorphic to the Klein four-group.
\end{abstract}

\section{Introduction}

Let $S$ be an additively written semigroup. We denote by $\P(S)$ the \textit{large power semigroup} of $S$, i.e.\ the family of all non-empty subsets of $S$, endowed with the binary operation of setwise addition\[(X,Y)\mapsto \{x+y:x\in X,y\in Y\}.\] Moreover, for a monoid $H$, we denote by $\P_{\fin,0}(H)$ the set of finite subsets of $H$ that contain the zero element of $H$; it is a submonoid of $\P(H)$ with identity $\{0\}$, called the \textit{reduced finitary power monoid} of $H$. 

The systematic study of power semigroups began in the 1960s and was initiated by Tamura and Shafer. A central question that arose from their work, called the \emph{isomorphism problem for power semigroups}, is whether, for semigroups $S$ and $T$ in a certain class $\O$, an isomorphism between $\P(S)$ and $\P(T)$ implies that $S$ and $T$ are isomorphic. Although this was answered in the negative by Mogiljanskaja \cite{Mo73} for the class of all semigroups, several other classes have been found for which the answer is positive, see for example \cite{Ga-Zh14,Go-Is84,Sh67, Sh-Ta67,Tr25}. Recently, the investigation of the arithmetic of reduced finitary power monoids was set in motion \cite{An-Tr21,Bi-Ge25,Co-Tr25,Fa-Tr18,Tr22,Re26a}, motivated by classic questions in additive combinatorics.  
Moreover, the isomorphism problem for the reduced finitary power monoid was answered in the positive for the class of rational Puiseux monoids and torsion groups \cite{2Tr-Ya25,3Tr-Ya25} and in the negative for commutative valuation monoids \cite{Ra25}. In \cite{Tr-Ya25}, Tringali and Yan investigate the automorphism group of $\P_{\fin,0}(\mathbb{N}_0)$, where $\mathbb{N}_0$ is the monoid of non-negative integers.\ They show that this group consists of two automorphisms, the identity and the reversion map, mapping a set $X$ to $\max(X)-X$. The existence of the latter is interesting in the following sense. Every automorphism of a monoid $H$ can be canonically extended to an automorphism of $\P_{\fin,0}(H)$, which yields an embedding \[\aut(H)\hookrightarrow\aut(\P_{\fin,0}(H)),\] raising the question whether this embedding is an isomorphism. Since the reversion map does not arise in this way, it can thus be considered an exceptional automorphism of $\P_{\fin,0}(\mathbb{N}_0)$. 
Following up on this result, in \cite{Tr-We25}, Tringali and Wen determine the automorphism group of $\P_{\fin}(\mathbb{Z})$, the monoid consisting of all finite subsets of the group of integers, and show that this group is isomorphic to $C_2\times \Dih_{\infty}$, where $C_2$ is the cyclic group of order 2 and $\Dih_\infty$ is the infinite dihedral group.  \\

In this paper, we determine the automorphism group of $\P_{\fin,0}(G)$, where $G$ is an additively written finite abelian group.\ Since we only deal with finite monoids, we will switch to the notation $\Pf(G)$, where $\Pf(G)$ is the \textit{reduced} power monoid of $G$, consisting of all subsets of $G$ that contain $0$. Our main result (Theorem \ref{thm}) states that the automorphism groups of $G$ and $\Pf(G)$ are isomorphic, i.e.\ that every automorphism of $\Pf(G)$ is the canonical extension of an automorphism of $G$, with a single exception, when $G$ is isomorphic to the Klein four-group. \\

In the next section, we gather some preliminaries and, given a finite abelian group $G$ and an automorphism $f$ of $\Pf(G)$, we prove the existence of a bijection $g:G\to G$, called the \textit{pullback} of $f$. This function arises in a natural way from the fact that $f$ restricts to a bijection between the 2-element sets in $\Pf(G)$. In our particular setting, it satisfies an even stronger property. It is an automorphism of $G$ and, as such, provides a very useful tool to tackle our main problem. In \cite{3Tr-Ya25}, Tringali and Yan prove these results in a more general setting. Namely, they show that for any isomorphism $f:\P_{\text{fin},0}(H)\to\P_{\text{fin},0}(K)$, where $H,K$ are additive monoids, the pullback $g$ of $f$ exists. More specifically, the authors show that $g$ is an isomorphism if $H$ and $K$ are (not necessarily abelian) torsion groups. We will give simplified proofs of these results, catered to our setting of finite abelian groups.

\section{Preliminaries and the pullback}

We denote by $\mathbb{N}$ the set of positive integers and for $a,b\in\mathbb{N}$ with $a\leq b$, we denote by $[a,b]$ the discrete interval between $a$ and $b$, that is, the set $\{n\in\mathbb{N}:a\leq n\leq b\}$. Let $G$ be an (additively written) finite abelian group. Recall that we can uniquely write $G$ as a direct sum \[C_{n_1}\oplus\ldots\oplus C_{n_r},\] where $r\in\mathbb{N}$, $n_i$ is a positive integer greater than 1 for every $i\in[1,r]$ with the property that $n_j$ divides $n_{j+1}$ for every $j\in[1,r-1]$ and $C_k$ denotes the cyclic group of order $k$. The positive integer $r$ is called the \textit{rank} of $G$. \\

Let $X,Y\in\Pf(G)$ and $n\in\mathbb{N}$. We denote by $nX$ the $n$-fold sum of $X$, that is, \[nX=\underbrace{X+\ldots+X}_{n \text{ times}}.\] We say that $X$ divides $Y$, or that $Y$ is divisible by $X$ in $\Pf(G)$ if $X+Z=Y$ for some $Z\in\Pf(G)$. Note that, since $0\in Z$ for every $Z\in \Pf(G)$, $Y$ is divisible by $X$ only if $X\subseteq Y$. Note that a set $X\in\Pf(G)$ is idempotent in $\Pf(G)$, i.e.\ we have $X+X=X$, if and only if $X$ is a submonoid of $G$. However, since every element of $G$ has finite order, every submonoid of $G$ is in fact a subgroup of $G$. Hence the idempotent elements in $\Pf(G)$ are precisely the subgroups of $G$. \\

Let $h$ be an automorphism of $G$. It is straightforward to verify that \[F_h:\Pf(G)\to\Pf(G)\] \[X\mapsto \{h(x):x\in X\}\] is an automorphism of $\Pf(G)$, called the \textit{augmentation} of $h$. Moreover, for automorphisms $h_1,h_2$ of $G$, we have $F_{h_1}=F_{h_2}$ if and only if $h_1=h_2$. Hence we obtain a canonical embedding $\aut(G)\hookrightarrow\aut(\Pf(G))$, by mapping every automorphism to its augmentation. \\

We continue with a short, useful lemma, showing that an automorphism of $\Pf(G)$ preserves subgroups to a certain degree.

\begin{lemma} \label{subgroup}
    Let $G$ be a finite abelian group and let $f$ be an automorphism of $\Pf(G)$. If $H$ is a subgroup of $G$, then $f(H)$ is a subgroup of $G$ as well and we have $|f(H)|=|H|$.
\end{lemma}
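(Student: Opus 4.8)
The plan is to exploit two features of $\Pf(G)$ that any automorphism must respect: its idempotents and its divisibility relation. For the first assertion, I would observe that if $H+H=H$, then applying the homomorphism $f$ gives $f(H)+f(H)=f(H+H)=f(H)$, so $f(H)$ is again idempotent. Since the preliminaries identify the idempotents of $\Pf(G)$ precisely with the subgroups of $G$, this shows at once that $f(H)$ is a subgroup. No cardinality information is used here, so the real content of the lemma lies in the second claim.

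For the equality $|f(H)|=|H|$, I would count divisors. The key preparatory step is to describe, for a subgroup $H$, exactly which elements $Z\in\Pf(G)$ divide $H$ in $\Pf(G)$. On one hand, the preliminaries already note that $Z$ dividing $H$ forces $Z\subseteq H$. On the other hand, if $Z\subseteq H$, then (since $0\in Z$ and $H$ is a subgroup) one has $H=\{0\}+H\subseteq Z+H\subseteq H+H=H$, so $Z+H=H$ and hence $Z$ divides $H$. Thus the divisors of $H$ are exactly the subsets of $H$ containing $0$, of which there are precisely $2^{|H|-1}$. I expect verifying this backward implication — that every subset containing $0$ actually divides $H$, which is where the subgroup hypothesis is essential — to be the only genuinely substantive point.

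It remains to turn the divisor count into the desired equality. Since $f$ is an automorphism, $Z$ divides $H$ if and only if $f(Z)$ divides $f(H)$, and as $f$ is a bijection it carries the set of divisors of $H$ bijectively onto the set of divisors of $f(H)$. Applying the divisor count to both the subgroup $H$ and the subgroup $f(H)$ therefore yields $2^{|H|-1}=2^{|f(H)|-1}$, and comparing exponents gives $|f(H)|=|H|$, completing the proof. I would emphasize that the pure lattice structure of subgroups is insufficient here — for instance a lattice automorphism could in principle swap the order-$2$ and order-$3$ subgroups of $C_6$ — so it is precisely the divisor count, rather than mere inclusion-preservation, that pins down the cardinality.
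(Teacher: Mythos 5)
Your proposal is correct and follows essentially the same route as the paper: idempotents of $\Pf(G)$ are exactly the subgroups, which handles the first claim, and the cardinality equality comes from counting the sets $X\in\Pf(G)$ with $X+H=H$ (equivalently, the divisors of $H$, i.e.\ the $2^{|H|-1}$ subsets of $H$ containing $0$) and noting that $f$ matches them bijectively with the corresponding sets for $f(H)$. Your explicit verification that every $Z\subseteq H$ with $0\in Z$ satisfies $Z+H=H$ is the same observation the paper makes in passing, so there is no substantive difference.
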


\begin{proof}
   Since $H$ is a subgroup of $G$, it is an idempotent element in $\Pf(G)$. By noting that every isomorphism of monoids maps idempotent elements to idempotent elements, it follows that $f(H)$ is a subgroup of $G$.  To prove the second statement, let $H$ be a subgroup of $G$ and observe that for a set $X\in \Pf(G)$, the equation \[X+H=H\] is satisfied if and only if $X\subseteq H$. Similarly, we have \[f(X)+f(H)=f(H)\] if and only if $f(X)\subseteq f(H)$. In conclusion, $X\subseteq H$ if and only if $f(X)\subseteq f(H)$ and by counting the number of all such sets $X$, we obtain $|f(X)|=|X|$.
\end{proof}

We will now show that in the case $G\simeq C_2^2$, the automorphism groups of $G$ and $\Pf(G)$ are indeed not isomorphic.

\begin{example}\label{example}
    Let $G\simeq C_2^2$. We claim that the automorphisms of $\Pf(G)$ are precisely the bijective maps $\Pf(G)\to\Pf(G)$ that are cardinality-preserving. Indeed, let $f$ be an automorphism of $\Pf(G)$. Since every $X\in\Pf(G)$ with $|X|\neq 3$ is a subgroup of $G$, we infer by Lemma \ref{subgroup} that $f$ is cardinality-preserving. \\

    Conversely, let $f:\Pf(G)\to\Pf(G)$ be a cardinality-preserving bijection. We aim to show that $f$ is a homomorphism. To this end, let $X,Y\in\Pf(G)$. Since $f(\{0\})=\{0\}$ by assumption, we can assume that both $X$ and $Y$ contain at least two elements. Let $a,b\in G$ be distinct nonzero elements. Then \[\{0,a\}+\{0,b\}=G,\] from which we can immediately conclude that \[X+Y= G\] if $X\neq Y$. In this case, we have $f(X)\neq f(Y)$ and, since $f$ is cardinality-preserving, we obtain \[f(X)+f(Y)=G=f(G)=f(X+Y).\] Suppose now that $|X|\geq 3$. Then $X+X=G$ and, since $|f(X)|\geq 3$, we see that \[f(X)+f(X)=G=f(G)=f(X+X).\] Moreover, if $|X|=2$, then $|f(X)|=2$, which yields \[f(X)+f(X)=f(X)=f(X+X).\] Hence $f$ is an automorphism of $\Pf(G)$. 
Since any cardinality-preserving bijection of $\Pf(G)$ is uniquely determined by a permutation of the 2-element sets in $\Pf(G)$ and a permutation of the 3-element sets in $\Pf(G)$, we obtain $\aut(\Pf(G))\simeq S_3^2$, where $S_3$ is the symmetric group on 3 elements. However, we clearly have $\aut(G)\simeq S_3$, which implies that $\aut(G)\not\simeq \aut(\Pf(G))$.
\end{example}

We continue by proving the existence of the pullback of an automorphism of $\Pf(G)$.

\begin{lemma}\label{pullback}
   Let $G$ be a finite abelian group and let $f$ be an automorphism of $\Pf(G)$. Then $f$ maps 2-element sets to 2-element sets.
\end{lemma}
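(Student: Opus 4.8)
The plan is to attach to each $X\in\Pf(G)$ with $X\neq\{0\}$ two quantities that are manifestly preserved by the isomorphism $f$ and that together single out the $2$-element sets, thereby circumventing the fact that Lemma \ref{subgroup} only gives us control over the cardinalities of \emph{subgroups}. Set $\ell(X)=\min\{k\in\mathbb{N}:kX\text{ is idempotent}\}$, the number of steps after which the ascending chain $X\subseteq 2X\subseteq 3X\subseteq\cdots$ stabilizes, and let $e(X)=\ell(X)\cdot X$ be the resulting idempotent, which is the subgroup of $G$ generated by $X$. Both are well defined because $G$ is finite, so the chain is eventually constant and its limit is a subsemigroup, hence (as observed before the lemma) a subgroup.

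First I would verify that $f$ respects these data. Since $f$ is a monoid homomorphism, $f(kX)=k\,f(X)$ for every $k$, and since isomorphisms send idempotents to idempotents, $kX$ is idempotent if and only if $k\,f(X)$ is. Hence $\ell(f(X))=\ell(X)$ and $e(f(X))=f(e(X))$. By Lemma \ref{subgroup}, $f(e(X))$ is a subgroup with $|f(e(X))|=|e(X)|$, so $|e(f(X))|=|e(X)|$ as well.

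Next comes the counting inequality, which is the heart of the matter. Consider the chain $\{0\}=0\cdot X\subseteq X\subseteq 2X\subseteq\cdots\subseteq \ell(X)\cdot X=e(X)$. The key observation is that a coincidence $kX=(k+1)X$ already forces $kX$ to be idempotent: from $kX+X=kX$ one deduces $kX+jX=kX$ for all $j\geq 1$, and taking $j=k$ gives $2(kX)=kX$. Consequently each of the first $\ell(X)$ inclusions in the chain is strict, so $|e(X)|=|\ell(X)\cdot X|\geq 1+\ell(X)$, that is, $\ell(X)\leq |e(X)|-1$. If moreover equality holds and $|X|\geq 2$, then every inclusion must add exactly one element; inspecting the very first step $\{0\}\subsetneq X$, this is possible only when $|X|=2$.

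Finally I would combine these facts. For a $2$-element set $X=\{0,a\}$ one computes directly $kX=\{0,a,\dots,ka\}$ and $e(X)=\langle a\rangle$, whence $\ell(X)=\ord(a)-1=|e(X)|-1$, so $X$ lies exactly in the equality case. Since $\ell$ and $|e(\cdot)|$ are $f$-invariant and $f(X)\neq\{0\}$ (so $|f(X)|\geq 2$), the set $f(X)$ again satisfies $\ell(f(X))=|e(f(X))|-1$, and the characterization above yields $|f(X)|=2$. I expect the main obstacle to be precisely the strict-growth step: the only cardinality that Lemma \ref{subgroup} hands us is that of the generated subgroup, so the entire difficulty is to manufacture, out of that datum, control over $|f(X)|$ for the non-subgroup set $X$; the bridge is the rigidity that the chain $kX$ cannot stall before reaching $e(X)$, and it is this rigidity that must be argued cleanly.
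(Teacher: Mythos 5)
Your proof is correct, and it takes essentially the same route as the paper: both arguments rest on the chain $X \subsetneq 2X \subsetneq \cdots$ stabilizing at the subgroup generated by $X$, whose cardinality is controlled via Lemma \ref{subgroup}, followed by a pigeonhole count that forces $|f(X)|=2$. The only difference is packaging — the paper pushes the explicit chain for $\{0,a\}$ through $f$ (using injectivity of $f$ to preserve strictness), whereas you encode the same count in the $f$-invariants $\ell(X)$ and $|e(X)|$ and characterize $2$-element sets by the equality $\ell(X)=|e(X)|-1$, with strictness supplied by your (correct) observation that a stalled chain is already idempotent.
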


\begin{proof}
    Let $a\in G$ be a nonzero element and set $X:=\{0,a\}, Y:=f(X)$ and $n:=\ord(a)$. Since $f(\{0\})=\{0\}$, we see by the injectivity of $f$ that $|Y|\geq 2$. Note that $H:=(n-1)X$ is the cyclic subgroup of $G$, generated by $a$ and that we have a chain \[X\subsetneq 2X \subsetneq \ldots\subsetneq (n-1)X=nX=H,\] which in turn yields a chain \[Y\subsetneq 2Y \subsetneq \ldots\subsetneq (n-1)Y=nY=f(H).\] However, since $H$ is a subgroup of $G$, we infer by Lemma \ref{subgroup} that $|f(H)|=|H|=n$, which, considering that $|Y|\geq 2$, is only possible if $|Y|=2$. Since $X$ was chosen to be an arbitrary 2-element set, we are done.
\end{proof}

The previous lemma allows us to define a function $g:G\to G$ in the following way. We set $g(0)=0$ and for any nonzero $a\in G$, $g(a)$ is defined via $f(\{0,a\})=\{0,g(a)\}$. This function is clearly a well-defined bijection and is called the \textit{pullback} of the automorphism $f$. If $g$ is the identity on $G$, we say that $f$ has trivial pullback. \\

The aim of the two following lemmas and Proposition \ref{pulliso} is to show that the pullback satisfies a much stronger property. Namely, it is itself an automorphism of $G$.
\begin{lemma} \label{subgroup2}
    Let $G$ be a finite abelian group and let $f$ be an automorphism of $\Pf(G)$ with pullback $g$. If $H$ is a subgroup of $G$, then $f(H)=g[H]$.
\end{lemma}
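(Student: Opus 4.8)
The plan is to prove the set equality by establishing the inclusion $g[H]\subseteq f(H)$ and then observing that both sides have the same cardinality. The central tool will be the equivalence extracted in the proof of Lemma \ref{subgroup}: for a subgroup $H$ of $G$ and any $X\in\Pf(G)$, one has $X\subseteq H$ if and only if $f(X)\subseteq f(H)$. This in turn rests on the observation that, for a subgroup $H$, the identity $X+H=H$ holds precisely when $X\subseteq H$, combined with $f$ being a monoid homomorphism so that $X+H=H$ is equivalent to $f(X)+f(H)=f(H)$.

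First I would handle the inclusion $g[H]\subseteq f(H)$. Since $g(0)=0$ and $f(H)$ is a subgroup of $G$ (hence contains $0$) by Lemma \ref{subgroup}, the zero element causes no trouble. For a nonzero $a\in H$, the two-element set $\{0,a\}$ is contained in $H$, so by the equivalence above $f(\{0,a\})\subseteq f(H)$. By the very definition of the pullback we have $f(\{0,a\})=\{0,g(a)\}$, and therefore $g(a)\in f(H)$. Ranging over all $a\in H$ yields $g[H]\subseteq f(H)$.

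It then remains to upgrade this inclusion to an equality, which follows from a cardinality count. Since $g$ is a bijection of $G$, its restriction to $H$ is injective, so $|g[H]|=|H|$. On the other hand, Lemma \ref{subgroup} gives $|f(H)|=|H|$. Thus $g[H]$ is a subset of $f(H)$ of the same finite cardinality, forcing $g[H]=f(H)$.

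The argument is short, and the only point requiring care is the passage from the membership of individual elements to the full set equality: I would use the pullback's compatibility with containment of two-element sets to obtain the single inclusion $g[H]\subseteq f(H)$, and then lean on the cardinality-preserving property of $f$ from Lemma \ref{subgroup} to close the gap, rather than attempting to verify the reverse inclusion $f(H)\subseteq g[H]$ directly. I do not anticipate any serious obstacle beyond correctly invoking the characterization of divisibility by a subgroup established earlier.
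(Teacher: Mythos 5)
Your proof is correct and takes essentially the same route as the paper's: both rest on the equivalence that, for a subgroup, $\{0,a\}+H=H$ holds if and only if $a\in H$, transported through $f$ to the subgroup $f(H)$. The only difference is cosmetic — the paper reads off both inclusions from the resulting biconditional (using that $g$ is a bijection), whereas you prove the single inclusion $g[H]\subseteq f(H)$ and close with the cardinality statement $|f(H)|=|H|$ of Lemma \ref{subgroup}.
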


\begin{proof}
   Let $H$ be a subgroup of $G$ and observe that, by Lemma \ref{subgroup}, $f(H)$ is a subgroup of $G$ as well. We argue that for a nonzero $a\in G$, the equation \[\{0,a\}+H=H\] is satisfied if and only if $a\in H$. Similarly, we have \[\{0,g(a)\}+f(H)=f(H)\] if and only if $g(a)\in f(H)$. In conclusion, $a\in H$ if and only if $g(a)\in f(H)$, which shows that $f(H)=g[H]$.
\end{proof}

\begin{lemma} \label{power}
Let $G$ be a finite abelian group and let $f$ be an automorphism of $\Pf(G)$ with pullback $g$. Then $\ord(a)=\ord(g(a))$ and $g(na)=ng(a)$ for every nonzero $a\in G$ and $n\in\mathbb{N}$.
\end{lemma}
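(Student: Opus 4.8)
The plan is to prove the two assertions separately, deriving the order equality first since it underpins the scaling identity. Throughout I fix a nonzero $a\in G$, write $m=\ord(a)$, and abbreviate $X=\{0,a\}$ and $Y=f(X)=\{0,g(a)\}$; note that $f(kX)=kY$ for every $k\in\N$, because $f$ is a homomorphism.

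For the order statement I would argue that $f$ carries the cyclic group $H=\langle a\rangle$ onto $\langle g(a)\rangle$. Indeed, by Lemma \ref{subgroup2} we have $f(H)=g[H]$, and by Lemma \ref{subgroup} this is a subgroup of order $m$; since $a\in H$ gives $g(a)\in g[H]=f(H)$, while every $kY$ consists of multiples of $g(a)$, the chain $Y\subsetneq 2Y\subsetneq\cdots\subsetneq(m-1)Y=f(H)$ from the proof of Lemma \ref{pullback} is squeezed inside $\langle g(a)\rangle$ and forces $f(H)=\langle g(a)\rangle$, whence $\ord(g(a))=|f(H)|=m$. (Equivalently, the evident inequality $\ord(g(a))\le\ord(a)$ applied to both $f$ and $f^{-1}$, whose pullback is $g^{-1}$, yields equality.) As a consequence $g$ restricts to an order-preserving bijection $\langle a\rangle\to\langle g(a)\rangle$, so in particular $g(na)$ is always a multiple of $g(a)$.

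For the scaling identity I would first reduce to $1\le n\le m-1$ by periodicity: if $n\equiv s\pmod m$ then $na=sa$ and $ng(a)=sg(a)$, since $\ord(g(a))=m$. The key observation is the purely set-theoretic identity
\[\{0,na\}+(n-1)X=(2n-1)X,\]
valid in $\Pf(\langle a\rangle)$ whenever $2n-1\le m-1$, that is, for $1\le n\le m/2$. Applying the homomorphism $f$ turns this into $\{0,g(na)\}+(n-1)Y=(2n-1)Y$, an equation of interval sets inside the cyclic group $\langle g(a)\rangle$; writing $g(na)=jg(a)$ and comparing the two sides as subsets of $\langle g(a)\rangle$ pins down $j=n$ uniquely, so that $g(na)=ng(a)$. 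I emphasize that this step deliberately avoids any claim that $f$ preserves set inclusions, which is false in general (as Example \ref{example} shows), using instead only that $f$ is a homomorphism together with the order equality.

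It remains to reach the \emph{upper half} $m/2<n\le m-1$, and this is where I expect the genuine difficulty. The forward identity above breaks down there, because for such $n$ the set $(2n-1)X$ wraps around to all of $\langle a\rangle$, and the transported equation is then satisfied by every candidate and carries no information. My plan is to reduce the upper half to the single reflection identity $g(-a)=-g(a)$: once this is known, running the lower-half argument with $-a$ in place of $a$ immediately gives $g(-ka)=-kg(a)$ for $1\le k\le m/2$, which is exactly the upper half. To obtain $g(-a)=-g(a)$ I would iterate the lower-half scaling through auxiliary generators, noting that from any established index $j$ (so $g(ja)=jg(a)$) the same interval identity applied with generator $ja$ produces $g(kja)=kjg(a)$ for $1\le k\le\tfrac12\ord(ja)$, and a covering argument shows that these propagation steps eventually reach every residue modulo $m$, in particular $-1$. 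The main obstacle is precisely this reflection step: the clean forward method only sees multiples up to $m/2$, and the tempting shortcut of reading $g(na)$ off the image of a sumset that contains $na$ fails, because membership of individual elements is not transported by $f$.
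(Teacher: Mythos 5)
Your order argument and your lower-half argument are sound, and the latter is built on the same identity the paper uses, namely $(n-1)\{0,a\}+\{0,na\}=(2n-1)\{0,a\}$ transported by $f$ into $\langle g(a)\rangle$. (Incidentally, this identity is valid for \emph{all} $n$, not only for $2n-1\le m-1$ --- both sides wrap around identically; what fails in the upper half is only that the transported equation no longer determines $j$ uniquely, as you correctly observe.) The genuine gap is your upper half, and the proposed repair does not work: the covering claim --- that iterating ``from an established residue $j$ one may establish $kj$ for $1\le k\le\tfrac12\ord(ja)$'' eventually reaches every residue mod $m$, in particular $-1$ --- is false for many moduli. For $m=4$ you establish only $\{1,2\}$ (from $2$ one has $\ord(2a)=2$, so $k\le 1$ yields nothing new) and $3=-1$ is never reached; for $m=6$ you get stuck at $\{1,2,3\}$; for $m=8$ at $\{1,2,3,4,6\}$; for $m=12$ at $\{1,2,3,4,5,6,8,10\}$. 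Worse, for $m=8$ no injectivity or counting argument can rescue the two missing residues: $5a$ and $7a$ both have order $8$, the available targets $5g(a)$ and $7g(a)$ both have order $8$, and nothing your proposal transports distinguishes the assignment $g(5a)=5g(a)$ from $g(5a)=7g(a)$. So the reflection identity $g(-a)=-g(a)$, on which your entire upper half rests, is never established.

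The paper closes exactly this hole by extracting more from the same identity: instead of using the single equation $(n-1)\{0,a\}+\{0,na\}=(2n-1)\{0,a\}$ and solving for $j$, it characterizes $n-1$ as the \emph{smallest} positive integer $t$ for which $t\{0,a\}+\{0,na\}=s\{0,a\}$ admits a solution $s\in\N$ (if $t<n-1$ worked, one would deduce $-a\in t\{0,a\}$, which is impossible since $t<m-1$). Minimality is a statement about the divisibility structure of $\Pf(G)$, hence is preserved by the isomorphism $f$; applying the same characterization to $g(a)$ and $kg(a)$, where $g(na)=kg(a)$ with $k\in[2,m-1]$ and $\ord(g(a))=m$, shows the minimal $t$ is also $k-1$, whence $k=n$. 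This works uniformly for all $n\in[2,m-1]$, with no lower/upper split and no reflection step. Your remaining structure (the order equality, the reduction of $n$ mod $m$, and $g(na)\in\langle g(a)\rangle$ via Lemma \ref{subgroup2}) is fine and can be kept; only the ``compare the two sides as subsets'' step needs to be replaced by this minimality argument.
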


\begin{proof}
    Set $m:=\ord(a)$ and note that $m$ is the smallest positive integer such that \[H:=(m-1)\{0,a\}=m\{0,a\}=\{0,a,2a,\ldots,(m-1)a\}.\] This implies that $m$ is the smallest positive integer such that \[f(H)=(m-1)\{0,g(a)\}=m\{0,g(a)\},\] whence $\ord(g(a))=m$. 

    Let us now verify that $g(na)=ng(a)$ for every $n\in\mathbb{N}$. Since $g(0)=0$ by definition, we can clearly assume that $m\geq 3$ and $n\in [2,m-1]$. By applying Lemma \ref{subgroup2} to the subgroup $H$ of $G$, we first see that $g(na)=kg(a)$ for some $k\in[0,m-1]$. Clearly, $k\not\in\{0,1\},$ since, by the injectivity of $g$, this would imply that $na\in\{0,a\}$, a contradiction to our assumption. \\
    
    We now claim that $n-1$ is the smallest positive integer with the property \[(n-1)\{0,a\}+\{0,na\}=s\{0,a\}\] for some $s\in\mathbb{N}$. Indeed, it is easy to verify that \[(n-1)\{0,a\}+\{0,na\}=(2n-1)\{0,a\}.\] On the other hand, if there is a positive integer $t<n-1$, satisifying \[t\{0,a\}+\{0,na\}=s\{0,a\}\] for some $s\in\mathbb{N}$, then $na\in s\{0,a\}$ implies that $s\geq n$ and $(n-1)a\in s\{0,a\}$. However, we have $(n-1)a\not\in t\{0,a\}$, whence \[(n-1)a\in na+t\{0,a\},\] or equivalently \[-a=(m-1)a\in t\{0,a\}.\] Since $t<n-1<m-1$, this is impossible, proving our claim. In conclusion, $n-1$ is the smallest positive integer, satisfying \[(n-1)\{0,g(a)\}+\{0,kg(a)\}=s\{0,g(a)\}\] for some $s\in\mathbb{N}$, which by the previous argument, keeping in mind that $m=\ord(g(a))$ and $k\in[2,m-1]$, can only happen if $k=n$. Hence $g(na)=ng(a)$, which finishes our proof.
\end{proof}

\begin{proposition}\label{pulliso}
    Let $G$ be a finite abelian group and let $f$ be an automorphism of $\Pf(G)$ with pullback $g$. Then $g$ is an automorphism of $G$.
\end{proposition}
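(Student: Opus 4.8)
Since the pullback $g$ is already a bijection with $g(0)=0$, it remains only to prove that $g$ is a homomorphism; being bijective, it will then automatically be an automorphism of $G$. In other words, the entire task reduces to establishing additivity, $g(a+b)=g(a)+g(b)$ for all $a,b\in G$. The plan is to first verify additivity for \emph{independent} pairs (meaning $\langle a\rangle\cap\langle b\rangle=\{0\}$) and then bootstrap to all pairs along a basis adapted to the invariant factor decomposition of $G$.

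A quick preliminary I would record is that $g$ preserves independence of cyclic subgroups. By Lemma~\ref{power}, $g$ maps $\langle a\rangle$ bijectively onto $\langle g(a)\rangle$ for every nonzero $a$; since $g$ is injective, it commutes with intersections, so $\langle g(a)\rangle\cap\langle g(b)\rangle=g[\langle a\rangle]\cap g[\langle b\rangle]=g[\langle a\rangle\cap\langle b\rangle]$. Hence $\langle a\rangle\cap\langle b\rangle=\{0\}$ forces $\langle g(a)\rangle\cap\langle g(b)\rangle=\{0\}$.

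The heart of the argument treats an independent pair $a,b$, so that in particular $a+b\notin\langle a\rangle\cup\langle b\rangle$. The goal is to pin down the single element $g(a+b)$, and the difficulty is that $f$ is a priori only a map on \emph{sets} and transfers membership only through divisibility. The tempting set $\{0,a,b,a+b\}=\{0,a\}+\{0,b\}$ is useless here, since $\{0,a+b\}$ does not divide it. The key trick is that $\{0,a+b\}$ \emph{does} divide the coset union $\{0,a\}+\langle b\rangle$, because $\{0,a+b\}+\langle b\rangle=\langle b\rangle\cup(a+\langle b\rangle)=\{0,a\}+\langle b\rangle$. Applying $f$ and using $f(\langle b\rangle)=\langle g(b)\rangle$ (Lemmas~\ref{subgroup2} and~\ref{power}), divisibility is preserved, whence $\{0,g(a+b)\}\subseteq f(\{0,a\}+\langle b\rangle)=\langle g(b)\rangle\cup(g(a)+\langle g(b)\rangle)$. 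As $a+b\notin\langle b\rangle$, this places $g(a+b)\in g(a)+\langle g(b)\rangle$. Exchanging the roles of $a$ and $b$ gives $g(a+b)\in g(b)+\langle g(a)\rangle$. By the independence preserved above, $\langle g(a)\rangle\cap\langle g(b)\rangle=\{0\}$, so a coset of $\langle g(b)\rangle$ and a coset of $\langle g(a)\rangle$ meet in at most one point; since $g(a)+g(b)$ lies in both, the intersection is exactly $\{g(a)+g(b)\}$, forcing $g(a+b)=g(a)+g(b)$.

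Finally I would upgrade this to full additivity. Writing $G=C_{n_1}\oplus\dots\oplus C_{n_r}$ with standard generators $e_1,\dots,e_r$, I would show by induction on $s$ that $g\bigl(\sum_{i\le s}k_ie_i\bigr)=\sum_{i\le s}k_ig(e_i)$. The base case is Lemma~\ref{power}, and in the inductive step the elements $u=\sum_{i<s}k_ie_i\in\langle e_1,\dots,e_{s-1}\rangle$ and $v=k_se_s\in\langle e_s\rangle$ satisfy $\langle u\rangle\cap\langle v\rangle=\{0\}$ because the summands are independent (the degenerate cases $u=0$ or $v=0$ being trivial); thus the independent-pair step applies and yields $g(u+v)=g(u)+g(v)$. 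Consequently $g$ coincides with the homomorphism determined by $e_i\mapsto g(e_i)$, and being bijective it is an automorphism of $G$. The main obstacle is the middle step, namely transferring information about the individual element $a+b$ through the set map $f$; this is exactly what the coset-probe identity $\{0,a+b\}+\langle b\rangle=\{0,a\}+\langle b\rangle$ resolves, by exhibiting $\{0,a+b\}$ as a genuine divisor of a set whose $f$-image is under control.
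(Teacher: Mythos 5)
Your proof is correct, and its engine is the very identity on which the paper's proof turns: your coset-probe $\{0,a+b\}+\langle b\rangle=\{0,a\}+\langle b\rangle$ is, up to notation, the paper's key equation $\{0,sx+ty\}+(n-1)\{0,y\}=\{0,sx\}+(n-1)\{0,y\}$, since $(n-1)\{0,y\}=\langle y\rangle$. The scaffolding around it, however, is genuinely different. The paper localizes at the subgroup $H=\langle a,b\rangle$ and splits into cases: if $H$ is cyclic, Lemma \ref{power} finishes at once; if $H$ has rank two, it first shows that $\{g(x),g(y)\}$ is a basis of $f(H)$ by a counting argument ($|f(H)|=|H|=mn$ forces the sum in (4.1) to be direct), and then pins down both coordinates of $g(sx+ty)$ with two probes. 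You instead prove additivity only for independent pairs, replacing the basis-counting by two cheaper observations: injective maps commute with intersections (so $g$ preserves independence), and cosets of trivially intersecting subgroups meet in at most one point, so the two memberships $g(a+b)\in g(a)+\langle g(b)\rangle$ and $g(a+b)\in g(b)+\langle g(a)\rangle$ already force $g(a+b)=g(a)+g(b)$. Full additivity is then recovered by induction along an invariant-factor basis of $G$, so dependent pairs are never handled directly; their additivity falls out of the final extension step, which implicitly uses that $e_i\mapsto g(e_i)$ extends to a homomorphism of $G$ because $\ord(g(e_i))=\ord(e_i)$ by Lemma \ref{power} --- a point worth making explicit. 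What the paper's route buys is purely local information (an isomorphism $H\to f(H)$ for every subgroup of rank at most two, with no reference to a global decomposition of $G$); what yours buys is the elimination of both the cyclic/rank-two case distinction and the counting argument.
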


\begin{proof}
    Since $g$ is a bijection, it suffices to verify that it is a homomorphism. To this end, take two elements $a,b\in G$. Since $g(0)=0$ by definition, it follows that $g(0+c)=g(c)=g(0)+g(c)$ for every $c\in G$, whence we can assume that both $a$ and $b$ are nonzero. Let $H$ denote the subgroup of $G$, generated by $a$ and $b$. Clearly, $H$ is a finite abelian group of rank at most two. If $H$ is cyclic, then we can write $a=sc$ and $b=tc$, where $s,t\in\mathbb{N}$ and $c$ is a generator of $H$. Then, by Lemma \ref{power}, we obtain \[g(a+b)=g((s+t)c)=(s+t)g(c)=sg(c)+tg(c)=g(a)+g(b).\] If on the other hand, $H$ has rank two, then we find a basis $\{x,y\}$ of $H$ and we set $m:=\ord(x)$ and $n:=\ord(y)$. Observe that \[H=(m-1)\{0,x\}+(n-1)\{0,y\}\] and \begin{equation}\tag{4.1}
        f(H)=(m-1)\{0,g(x)\}+(n-1)\{0,g(y)\}.
    \end{equation}
    
    Moreover, by Lemmas \ref{subgroup} and \ref{subgroup2}, $f(H)=g[H]$ is a subgroup of $G$ and since $g$ is injective, we have $mn=|H|=|f(H)|$. Then (4.1) clearly implies that $\{g(x),g(y)\}$ is a basis of $f(H)$. Let now $s\in[0,m-1]$ and $t\in[0,n-1]$. From $f(H)=g[H]$, we know that $g(sx+ty)=ug(x)+vg(y)$ for some uniquely determined $u\in[0,m-1]$ and $v\in[0,n-1]$ and we aim to show that $s=u$ and $t=v$. By Lemma \ref{power}, we can assume that $s,t,u,v$ are all nonzero and we note that \[\{0,sx+ty\}+(n-1)\{0,y\}=\{0,sx\}+(n-1)\{0,y\}.\] Hence, by using Lemma \ref{power}, we see that $\{0,g(sx+ty)\}=\{0,ug(x)+vg(y)\}$ divides $\{0,sg(x)\}+(n-1)\{0,g(y)\}$, whence $s=u$. Similarly, from the equation \[\{0,sx+ty\}+(m-1)\{0,x\}=\{0,ty\}+(m-1)\{0,x\},\] we infer that $t=v$.\\
    
    In conclusion, we have proved that the restriction of $g$ to $H$ yields an isomorphism $H\to f(H)$. Since $a,b\in H$, we obtain $g(a+b)=g(a)+g(b)$, whence $g$ is an automorphism of $G$.
\end{proof}

\section{The automorphism group of $\Pf(G)$}

In this section, we prove our main result, namely that $\aut(\Pf(G))\simeq \aut(G)$, where $G$ is a finite abelian group, not isomorphic to the Klein four-group. Equivalently, we show that every automorphism $f$ of $\Pf(G)$ is the augmentation of an automorphism of $G$ or, even more precisely, using Proposition \ref{pulliso}, that $f$ is the augmentation of its pullback. To do this, it will be enough to restrict ourselves to the case, where $f$ has trivial pullback. \\

We will approach the problem by induction on the order of $G$. The induction step will heavily rely on Proposition \ref{core} and, since $G\simeq C_2^2$ is an exceptional case, it is necessary to include certain finite abelian groups in the induction base case. More precisely, we include all groups isomorphic to either $C_2^3$ or $C_2\oplus C_{2p}$, where $p$ is a prime number, since these are precisely the finite abelian groups containing an isomorphic copy of $C_2^2$ as a maximal proper subgroup. \\

We start with a useful lemma, for which we require some additional notation. Let $G$ be a finite abelian group, let $a\in G$ and $H$ a subgroup of $G$. We set $G_a:=G\setminus\{a\}$ and we denote by $\P_{0,H}(G)$ the set $H+\Pf(G)$, i.e.\ the set of all $X\in \Pf(G)$ that are divisible by $H$. Note that $\P_{0,H}(G)$ is a subsemigroup but not necessarily a submonoid of $\Pf(G)$. However, it is itself a monoid with identity $H$.

\begin{lemma} \label{prelim} Let $G$ be a non-trivial finite abelian group, $H$ a subgroup of $G$ and let $f$ be an automorphism of $\Pf(G)$ with trivial pullback.
    \begin{enumerate}
        \item We have $f(H)=H$ and $f$ restricts to an automorphism of $\Pf(H)$ with trivial pullback.
        \item There is an isomorphism of monoids $\P_{0,H}(G)\simeq \Pf(G/H)$. Moreover, $f$ restricts to an automorphism of $\P_{0,H}(G)$.
        \item For every nonzero $a\in G$, we have $f(G_a)=G_b$ for some $b\in G$. Moreover, if $\ord(a)\geq 3$, then $f(G_a)=G_a$.
    \end{enumerate}
\end{lemma}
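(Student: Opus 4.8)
The plan is to handle the three parts in order, with (1) and (2) following quickly from the earlier results and the real work concentrated in (3). Throughout, ``trivial pullback'' means the pullback $g$ of $f$ is $\id_G$, so that $f(\{0,a\})=\{0,a\}$ for every nonzero $a$, and $f$ fixes every subgroup of $G$ by Lemma \ref{subgroup2}. For part (1), since $g=\id_G$, Lemma \ref{subgroup2} gives $f(H)=g[H]=H$ at once. As $f$ is an automorphism with $f(\{0\})=\{0\}$ and $f(H)=H$, and as (from the proof of Lemma \ref{subgroup}) a set $X$ lies in $H$ exactly when $f(X)$ does, $f$ maps $\Pf(H)$ bijectively onto itself; since $\Pf(H)$ is a submonoid of $\Pf(G)$, this restriction is an automorphism of $\Pf(H)$, and its pullback is $g|_H=\id_H$, hence trivial.

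For part (2), I would write down the isomorphism explicitly. The elements of $\P_{0,H}(G)=H+\Pf(G)$ are precisely the $X\in\Pf(G)$ with $H+X=X$, that is, the unions of cosets of $H$ that contain $H$. The canonical projection $\pi\DP G\to G/H$ then induces $X\mapsto\pi[X]$, and I claim this is a monoid isomorphism $\P_{0,H}(G)\to\Pf(G/H)$: it is a bijection between $H$-coset-unions containing $0$ and subsets of $G/H$ containing the zero coset, it sends the identity $H$ to the identity $\{0_{G/H}\}$, and it respects setwise addition because $\pi$ is a group homomorphism. That $f$ restricts to an automorphism of $\P_{0,H}(G)$ follows from $f(H+\Pf(G))=f(H)+f(\Pf(G))=H+\Pf(G)$, using $f(H)=H$ from part (1) together with the surjectivity of $f$; the restriction is a bijective homomorphism fixing the identity $H$, hence an automorphism.

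For part (3), the first step is an algebraic characterization of the co-singletons. I would prove that, for $X\in\Pf(G)$ with $X\neq G$, one has $X=G_a$ for some nonzero $a$ if and only if $X+Y=G$ for every $Y\in\Pf(G)\setminus\{\{0\}\}$. The forward direction is immediate: if $y\in Y$ is nonzero then $G_a+Y\supseteq G_a\cup(G_a+y)=(G\setminus\{a\})\cup(G\setminus\{a+y\})=G$. For the converse, set $C:=G\setminus X$; testing the hypothesis against $Y=\{0,y\}$ forces $C\cap(C+y)=\emptyset$ for every nonzero $y$, which is possible only when $|C|=1$. Since $f$ is an automorphism fixing $\{0\}$ and fixing $G$ (the latter by part (1)), it preserves this property, so $f(G_a)=G_b$ for some nonzero $b$.

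The harder part, and the main obstacle, is forcing $b=a$ when $\ord(a)\geq 3$. My plan is to recover $a$ from the divisibility behaviour of $G_a$, which $f$ preserves because it is a monoid isomorphism fixing every two-element set $\{0,c\}$: for all nonzero $c$ we get $\{0,c\}\mid G_a\iff\{0,c\}\mid G_b$. I would establish the criterion that, for nonzero $c$, the set $\{0,c\}$ divides $G_a$ if and only if $\ord(c)\geq 3$ and $c\neq a$. One direction is easy: $\{0,c\}+Z$ always contains $c$ (as $0\in Z$), so $\{0,c\}$ never divides $G_c$; and if $\ord(c)=2$ then $\{0,c\}+Z$ is a union of $\langle c\rangle$-cosets, hence of even cardinality, whereas $|G_a|=|G|-1$ is odd. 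The remaining direction is a construction: given $\ord(c)\geq 3$ and $c\neq a$, one builds $Z$ coset-by-coset along $\langle c\rangle$, filling completely every coset not containing $a$ and, inside $a$'s coset, choosing $Z$ so that $Z\cup(Z+c)$ omits exactly $a$ — possible precisely because the cyclic length is at least $3$. This coset construction, where the hypothesis $\ord(c)\geq 3$ is used in an essential way, is the delicate step. Granting the criterion, I apply it at $c=a$: since $\{0,a\}$ does not divide $G_a$, it does not divide $G_b$ either, and as $\ord(a)\geq 3$ this is incompatible with $a\neq b$; hence $b=a$ and $f(G_a)=G_a$.
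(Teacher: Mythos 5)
Your proposal is correct, and parts (1) and (2) are essentially the paper's own proof: (1) is Lemma~\ref{subgroup2} plus the equivalence $X\subseteq H \Leftrightarrow f(X)\subseteq H$, and (2) is the same projection isomorphism $H+X\mapsto\{x+H:x\in X\}$ together with the observation that $f(H)=H$ forces $f$ to preserve $\P_{0,H}(G)$.

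In part (3) you follow the same overall strategy as the paper --- first show that $f$ permutes the co-singletons $G_a$, then detect the missing element via divisibility by the $f$-fixed sets $\{0,c\}$ --- but the execution differs in two respects. For the key divisibility claim ($\{0,c\}$ divides $G_a$ whenever $\ord(c)\geq 3$ and $c\neq a$), the paper exhibits a slicker witness: $Y:=G_a\setminus\{a-c\}$. Since $\ord(c)\geq 3$, the element $a-2c$ lies in $Y$, so $a-c\in\{0,c\}+Y$ while $a\notin\{0,c\}+Y$, and the claim follows in one line. Your coset-by-coset construction (fill every $\langle c\rangle$-coset avoiding $a$, and take the run $\{a+c,\dots,a+(\ord(c)-2)c\}$ inside the coset of $a$) also works, but it is more laborious, and the point that $0$ lands in your set $Z$ when $a\in\langle c\rangle$ (which holds precisely because $a\neq c$) is a check you deferred inside the ``delicate step''; you should spell it out. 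Second, your endgame is actually more direct than the paper's: you apply the criterion at $c=a$ to $G_b=f(G_a)$, using only the trivial fact $\{0,a\}\nmid G_a$, and conclude $b=a$ outright, whereas the paper shows $f(G_b)\neq G_a$ for every $b\neq a$ and then invokes injectivity of $f$ on the finitely many co-singletons. Your parity argument ruling out $\ord(c)=2$ (sumsets with $\{0,c\}$ are unions of $\langle c\rangle$-cosets, hence of even size) is correct but is needed by neither endgame; only the sufficiency direction of your criterion and the case $c=a$ are ever used.
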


\begin{proof}
    (1) The first statement follows from Lemma \ref{subgroup2} and the fact that $f$ has trivial pullback. To prove the second statement, we argue that for any set $X\in\Pf(G)$, we have \[X+H=H\] if and only if $X\subseteq H$. Consequently, we obtain that $X\in \Pf(H)$ if and only if $f(X)\in \Pf(f(H))=\Pf(H)$. It follows that $f$ restricts to an automorphism of $\Pf(H)$ with trivial pullback. \\

    \noindent (2) It is straightforward to verify that the mapping \[\varphi_{H}:\P_{0,H}(G)\to\Pf(G/H)\] \[H+X\mapsto \{a+H:a\in X\},\] is an isomorphism. Moreover, by (1), we have $f(H)=H$, which implies that $f(\P_{0,H}(G))\subseteq \P_{0,H}(G)$. By a symmetric argument, we obtain $f^{-1}(\P_{0,H}(G))\subseteq \P_{0,H}(G)$, whence $f(\P_{0,H}(G))=\P_{0,H}(G)$. \\

    \noindent (3) The statement is clearly true if $|G|\leq 2$, whence we assume that $n:=|G|\geq 3$. We first show that for any set $X\in\Pf(G)$ of cardinality $n-1$, we have $|f(X)|=n-1$. Indeed, note that for any nonzero $a\in G$, we have \[\{0,a\}+X=G.\] By (1), we have $f(G)=G$ and so \[\{0,a\}+f(X)=G\] for every nonzero $a\in G$ as well. Clearly, $f(X)\neq G$ and if $|f(X)|<n-1$, we find nonzero distinct $a,b\in G$, which are not contained in $f(X)$. However, since $a\not\in \{0,a-b\}+f(X)$, we obtain \[\{0,a-b\}+f(X)\neq G,\] a contradiction. \\

    Let now $a,b\in G$ be distinct, nonzero elements with $\ord(a)\geq 3$. We claim that $\{0,a\}$ divides $G_b$. Indeed, set $Y:=G_b\setminus\{b-a\}$ and note that $b-2a\not\in \{b,b-a\}$. Thus $b-2a\in Y$ and $b-a\in\{0,a\}+Y$. However, we have $b\not\in \{0,a\}+Y$ and consequently $\{0,a\}+Y=G_b$.\\
    
    Using the first part of the proof to write $f(G_b)=G_c$ for some nonzero $c\in G$ and the fact that $f$ has trivial pullback, we conclude that $\{0,a\}$ divides $G_c$, whence $a\in G_c$ and $G_c\neq G_a$. This implies that $f(G_a)=G_a$, as desired.
\end{proof}

\begin{remark} \label{quotient}
    Let $G$ be a finite abelian group and let $H$ be a subgroup of $G$. By Lemma \ref{prelim} (2), any automorphism $f$ of $\Pf(G)$ with trivial pullback induces a canonical automorphism of $\Pf(G/H)$. We will denote this automorphism by $f_{G/H}$. Moreover, since $f$ has trivial pullback, we have $f(\{0,a\})=\{0,a\}$ for every $a\not\in H$. Hence $f_{G/H}(\{H,a+H\})=\{H,a+H\}$ and $f_{G/H}$ has trivial pullback as well. 
\end{remark}

\begin{proposition} \label{core}
    Let $G$ be a non-trivial finite abelian group with $G\not\simeq C_2^2$ and let $f$ be an automorphism of $\P_0(G)$ with trivial pullback. Suppose that the following two conditions hold. \begin{enumerate}
        \item[(\textbf{A})] For any proper subgroup $H$ of $G$, the restriction of $f$ to $\Pf(H)$ is the identity.
        \item[(\textbf{B})] For any subgroup $H$ of $G$, isomorphic to a cyclic group of prime order, the induced automorphism $f_{G/H}$ is the identity.
    \end{enumerate}
    Then $f$ is the identity.
\end{proposition}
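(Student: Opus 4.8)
The plan is to prove $f(X)=X$ for every $X\in\Pf(G)$ by reducing, in several stages, to one genuinely hard case. First I would use hypothesis (\textbf{A}) to dispose of all non-generating sets: if $X$ is contained in a proper subgroup $H$, then $X\in\Pf(H)$ and $f(X)=X$ by (\textbf{A}). So I may assume $X$ generates $G$. Next I would record the standing consequence of (\textbf{B}): for a subgroup $H$ of prime order, $f_{G/H}$ is the identity, so $f$ fixes every element of $\P_{0,H}(G)$; combined with $f(H)=H$ from Lemma \ref{prelim}(1), this gives $f(X)+H=X+H$ for every such $H$, i.e.\ $X$ and $f(X)$ have the same image in $G/H$. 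In particular, if $X$ is \emph{periodic}, meaning $X+t=X$ for some nonzero $t$, then $X$ is a union of cosets of the prime-order subgroup inside $\langle t\rangle$, hence lies in $\P_{0,H}(G)$ and is fixed by (\textbf{B}); applying the same to $f^{-1}$ (which again has trivial pullback and satisfies (\textbf{A}), (\textbf{B})) lets me assume that both $X$ and $f(X)$ are aperiodic.

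The technical heart is then a divisibility trick in the aperiodic case. For a nonzero $z\notin X$ I would put $W:=G\setminus(z-X)$; one checks directly that $X+W\subseteq G_z$ always, and aperiodicity supplies the reverse inclusion, since a missed point $y\neq z$ would force $x+(z-y)\in X$ for all $x\in X$, i.e.\ $X+(z-y)=X$, contradicting aperiodicity. Hence $X$ divides $G_z$ in $\Pf(G)$. Now if $\ord(z)\geq 3$, Lemma \ref{prelim}(3) gives $f(G_z)=G_z$, so applying $f$ to $X+W=G_z$ yields $f(X)\mid G_z$, whence $f(X)\subseteq G_z$ and $z\notin f(X)$; running the identical argument for $f^{-1}$ and the aperiodic set $f(X)$ gives the converse. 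Thus $X$ and $f(X)$ contain exactly the same elements of order at least $3$, so their symmetric difference $X\bigtriangleup f(X)$ consists solely of elements of order $2$.

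The hard part will be upgrading ``agreement on elements of order $\geq 3$'' to genuine equality by controlling the order-$2$ elements, and this is exactly where $G\not\simeq C_2^2$ must enter. The obstruction is that for $\ord(z)=2$, Lemma \ref{prelim}(3) only yields $f(G_z)=G_b$ for \emph{some} $b$, so the short divisibility argument no longer pins $z$ down; indeed in $C_2^2$ this ambiguity is real and $f$ may permute the three cofinite sets $G_z$. To defeat it in general I would combine the coset relations $f(X)+\{0,u\}=X+\{0,u\}$, taken over all order-$2$ elements $u$, with the fact that $X\bigtriangleup f(X)$ already lives among the order-$2$ elements: these relations say that $X$ and $f(X)$ meet exactly the same two-element sets $\{w,w+u\}$, which forces the symmetric difference to be empty once $G$ is large enough to separate such pairs.

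Concretely, I expect two regimes. When $G$ contains an element $h$ of order $\geq 3$, any suspected order-$2$ discrepancy $z$ can be routed through an element $z+kh\in X$ (with $k\neq 0$), which necessarily has order $\geq 3$ and hence lies in $f(X)$ by the previous step, producing the contradiction. When $G$ is an elementary abelian $2$-group of rank at least $3$, I would instead factor the relevant cofinite set as a sum of two subsets of proper subgroups — for instance $G_{a+b+c}=\{0,a,b\}+\{0,c,a+b\}$ in $C_2^3$ — each fixed by (\textbf{A}), so that $f(G_z)=G_z$ after all. The group $C_2^2$ is precisely the case in which neither device is available, which is exactly why it must be excluded; I expect verifying that one of the two devices always applies for $G\not\simeq C_2^2$ to be the most delicate bookkeeping in the argument.
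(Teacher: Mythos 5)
Your argument is correct, and genuinely different from the paper's, right up to the point where only elements of order $2$ remain in question. Using (\textbf{A}) for sets contained in proper subgroups, (\textbf{B}) for periodic sets and for the relations $f(X)+K=X+K$, and the witness $W:=G\setminus(z-X)$ to show that every aperiodic $X$ divides $G_z$ for each $z\notin X$, is a cleaner mechanism than the paper's: the paper instead fixes $a\notin X$, takes a set $Y$ of maximal cardinality divisible by $X$ with $a\notin Y$, and shows that $Y$ is periodic (hence fixed via (\textbf{B})). Combined with Lemma \ref{prelim}(3), your route correctly yields that $X\bigtriangleup f(X)$ consists only of elements of order $2$. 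Moreover, your machinery would finish the proof immediately if one knew $f(G_z)=G_z$ for \emph{all} nonzero $z$: for $z\notin X$ one gets $X\mid G_z$, hence $f(X)\mid f(G_z)=G_z$, hence $z\notin f(X)$, and the symmetric argument with $f^{-1}$ gives $f(X)=X$.

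The gap is that you never prove $f(G_z)=G_z$ when $\ord(z)=2$, and neither of your two ``regimes'' does the job. The first regime is not an argument: there is no reason why some $z+kh$ with $k\neq 0$ should lie in $X$ at all, and even if it did, the conclusion $z+kh\in X\cap f(X)$ produces no contradiction, since sets in $\Pf(G)$ are not closed under taking differences. Nor can the coset relations ``force the symmetric difference to be empty'': inside a Klein subgroup $V=\{0,u_1,u_2,u_3\}$ of $G$, the sets $\{0,u_1,u_2\}$ and $\{0,u_1,u_3\}$ have the same sum with every order-$2$ subgroup $K\leq V$ (all such sums equal $V$), so relations of the form $X+K=f(X)+K$, together with agreement on elements of order $\geq 3$, are perfectly compatible with a nonempty symmetric difference. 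What is needed---and what the paper establishes \emph{first}, before any divisibility analysis of general sets---is the following use of (\textbf{A}): if $f(G_a)=G_b$ with $a\neq b$ both of order $2$, then $T:=\{0,b,a+b\}$ lies in $\langle a,b\rangle\simeq C_2^2$, which is a \emph{proper} subgroup precisely because $G\not\simeq C_2^2$, so $f(T)=T$ by (\textbf{A}); since $T+(G\setminus\{a,b,a+b\})=G_a$, the set $T$ divides $G_a$, hence $T=f(T)$ divides $f(G_a)=G_b$, contradicting $b\in T$. Your second regime (the factorization $G_{a+b+c}=\{0,a,b\}+\{0,c,a+b\}$ in $C_2^3$) is a correct instance of this kind of appeal to (\textbf{A}), but it is worked out only for $C_2^3$; the Klein-subgroup argument above is the uniform statement your proof needs, and without it the order-$2$ case---the only place where the hypothesis $G\not\simeq C_2^2$ can enter---remains open.
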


\begin{proof}
   We can assume that $|G|\geq 3$, otherwise the statement is trivially true. Suppose that both conditions (\textbf{A}) and (\textbf{B}) hold and fix a nonzero $a\in G$. We start by showing that $f(G_a)=G_a$. By Lemma \ref{prelim} (3), we already know that $f(G_a)=G_b$ for some $b\in G$ and that $a=b$, when $\ord(a)\geq 3$. Hence we can assume that $\ord(a)=2$. By applying Lemma \ref{prelim} (3) to $f^{-1}(G_b)=G_a$, we obtain $\ord(b)=2$. Suppose towards a contradiction that $a\ne b$. Then the set \[X:=\{0,b,a+b\}\] is contained in the subgroup $H$ of $G$, generated by $a$ and $b$. 
    Since $G\not\simeq C_2^2$ by assumption, we see that $H$ is a proper subgroup of $G$ and by condition (\textbf{A}), we conclude that $f(X)=X$. The identity \[X+(G\setminus\{a,b,a+b\})=G_a,\] now implies that $X$ divides $f(G_a)=G_b$, which is clearly a contradiction, since $b\in X$. \\
    
    Let now $X\in\Pf(G)$ be an arbitrary set. We will show that $f(X)=X$. For this, we can assume by Lemma \ref{prelim} (1) that $X\neq G$. Fix an element $a\in G$ that is not contained in $X$. Suppose that $X$ does not divide $G_a$ in $\Pf(G)$ and consider the collection of all sets $Y\in\Pf(G)$, which are divisible by $X$ and do not contain $a$. Take such a set $Y$ with maximal cardinality and note that $|Y|\leq |G|-2$, whence we find an element $b$, not contained in $Y$ and distinct from $a$. Set $c:=a-b\ne 0$ and note that \[Z:=\{0,c\}+Y\] does not contain $a$. Moreover, since $Y$ is divisible by $X$, $Z$ is divisible by $X$ as well, from which, by the maximality of $|Y|$, it follows that $|Y|=|Z|$ and consequently $Y=Z$. This implies that $Y=c+Y$, which in turn yields $Y=c+Y=2c+Y=\ldots =nc+Y$ for every positive integer $n$. 
    Consequently, \[H+Y=Y,\] where $H$ denotes the cyclic group generated by $c$. Let $K$ be a subgroup of $H$ of prime order and note again that we have \[K+Y=Y.\] By condition (\textbf{B}), the induced automorphism $f_{G/K}:\Pf(G/K)\to\Pf(G/K)$ is the identity, whence $f(K+Y)=K+Y$ and thus $f(Y)=Y$. \\

    To summarize, we have shown that $X$ divides either $G_a$ or $Y$, which implies that $f(X)$ divides either $f(G_a)=G_a$ or $f(Y)=Y$. From the fact that $a\not\in G_a\cup Y$, we infer that $a\not\in f(X)$ and since $a$ was chosen to be an arbitrary element, not contained in $X$, we conclude that $f(X)\subseteq X$. By applying the same argument to the inverse of $f$, we obtain $f(X)=X$, which was our desired result.
\end{proof}

From Proposition \ref{core}, it follows quickly that any automorphism of $\Pf(G)$ with trivial pullback, where $G$ is a cyclic group, is trivial. We will need this intermediate result later.

\begin{corollary}\label{cyclic}
    Let $G$ be a cyclic group and let $f$ be an automorphism of $\Pf(G)$ with trivial pullback. Then $f$ is the identity.
\end{corollary}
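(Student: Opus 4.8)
The plan is to argue by induction on the order of $G$, using Proposition \ref{core} as the engine of the induction step. The key structural observation that makes everything fit together is that both subgroups and quotients of a cyclic group are again cyclic, and, when proper (resp.\ by a nontrivial subgroup), of strictly smaller order; in particular, a cyclic group is never isomorphic to $C_2^2$, so the exceptional case excluded in Proposition \ref{core} can never arise along the way. Thus the induction hypothesis will always be available exactly where we need it.

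For the base case, suppose $|G|\leq 2$. If $G$ is trivial, then $\Pf(G)=\{\{0\}\}$ and there is nothing to prove; if $|G|=2$, then $\Pf(G)$ consists only of $\{0\}$ and $G$, both of which are fixed by any automorphism, so $f$ is the identity. For the inductive step, let $G$ be cyclic with $|G|\geq 3$ and assume the statement holds for all cyclic groups of smaller order. I would then verify that the two hypotheses of Proposition \ref{core} are satisfied. For condition (\textbf{A}), given a proper subgroup $H$ of $G$, note that $H$ is cyclic of smaller order, and by Lemma \ref{prelim} (1) the map $f$ restricts to an automorphism of $\Pf(H)$ with trivial pullback; the induction hypothesis then forces this restriction to be the identity. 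For condition (\textbf{B}), given a subgroup $H$ of prime order, the quotient $G/H$ is cyclic of smaller order, and by Remark \ref{quotient} the induced map $f_{G/H}$ is an automorphism of $\Pf(G/H)$ with trivial pullback, so the induction hypothesis again yields that $f_{G/H}$ is the identity. Since $G$ is cyclic and hence $G\not\simeq C_2^2$, Proposition \ref{core} applies and concludes that $f$ is the identity.

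I do not expect a genuine obstacle here: once Proposition \ref{core}, Lemma \ref{prelim} (1), and Remark \ref{quotient} are in hand, the argument is essentially a bookkeeping exercise in checking the hypotheses. The only point requiring a moment of care is to confirm that the objects to which the induction hypothesis is applied remain within the class of cyclic groups and genuinely have smaller order, which is immediate from the classification of subgroups and quotients of cyclic groups, together with the properness assumptions built into conditions (\textbf{A}) and (\textbf{B}).
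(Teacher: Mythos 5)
Your proposal is correct and follows essentially the same route as the paper: induction on $|G|$, verifying conditions (\textbf{A}) and (\textbf{B}) of Proposition \ref{core} via Lemma \ref{prelim} (1), Remark \ref{quotient}, and the induction hypothesis, using that subgroups and quotients of cyclic groups are cyclic. The only (immaterial) difference is that you take $|G|\leq 2$ as the base case and absorb prime orders into the inductive step, while the paper starts the induction at groups of prime order, where the hypotheses of Proposition \ref{core} hold trivially.
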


\begin{proof}
    Suppose first that $G$ has prime order. Then all the conditions in the statement of Proposition \ref{core} are trivially satisfied and it follows that $f$ is the identity. We proceed by induction on the order of $G$. Let $H$ be a proper subgroup of $G$. Since every subgroup of a cyclic group is itself cyclic, we infer that the restriction of $f$ to $\Pf(H)$ is the identity, by combining Lemma \ref{prelim} (1) and the induction hypothesis. 
    
    Moreover, if $H$ is a subgroup of $G$ of prime order, then $G/H$ is cyclic and by Remark \ref{quotient}, the induced automorphism $f_{G/H}$ has trivial pullback. Hence by the induction hypothesis, $f_{G/H}$ is the identity. Consequently, all the conditions in the statement of Proposition \ref{core} are satisifed and we conclude that $f$ is the identity.
\end{proof}

We will continue by proving our induction base case.

\begin{lemma} \label{c2^3}
    Let $G\simeq C_2^3$ and let $f$ be an automorphism of $\Pf(G)$ with trivial pullback. Then $f$ is the identity.
\end{lemma}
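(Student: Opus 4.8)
The plan is to deduce the statement from Proposition \ref{core}, which applies since $C_2^3\not\simeq C_2^2$. Thus it suffices to verify the hypotheses (\textbf{A}) and (\textbf{B}) for $f$, and the key observation is that \emph{both reduce to the single assertion that $f$ fixes every $3$-element set in $\Pf(G)$}. For (\textbf{A}): the proper subgroups of $G$ are trivial, of order $2$, or of order $4$; on the first two kinds $f$ restricts to the identity automatically (using $f(\{0\})=\{0\}$ and, for an order-$2$ subgroup, trivial pullback together with Lemma \ref{prelim}(1)), while for $H\simeq C_2^2$ of order $4$, by Example \ref{example} the restriction $f|_{\Pf(H)}$ is a trivial-pullback automorphism of $\Pf(C_2^2)$, hence equals the identity exactly when it fixes the three $3$-element subsets of $H$. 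For (\textbf{B}): if $K=\{0,k\}$ has order $2$ then $G/K\simeq C_2^2$ and, by Remark \ref{quotient}, $f_{G/K}$ has trivial pullback; moreover each $3$-element set of $G/K$ is the image under $\varphi_K$ of a set $K+\{0,a,b\}=\{0,k\}+\{0,a,b\}$, a product of the fixed $2$-element set $\{0,k\}$ with a $3$-element set. Consequently, once all $3$-element sets are fixed, $f$ fixes every such product and $f_{G/K}$ is the identity.

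It thus remains to prove that $f(T)=T$ for every $3$-element set $T=\{0,a,b\}$. Writing $H:=\langle a,b\rangle$ for the order-$4$ subgroup generated by $T$, the relation $f(T)+f(T)=f(T+T)=f(H)=H$ together with the injectivity of $f$ forces $f(T)$ to be one of the three $3$-element subsets of $H$; so $f$ merely permutes, inside each order-$4$ subgroup, the three $3$-element sets it contains. I would identify the seven nonzero elements of $G$ with the points of the Fano plane, so that order-$4$ subgroups correspond to its lines, and then exploit the identity
\[
\{0,x,y\}+\{0,x,z\}=G\setminus\{x+y+z\}=G_{x+y+z},
\]
valid whenever $z\notin\langle x,y\rangle$, in combination with Lemma \ref{prelim}(3), which guarantees $f(G_c)=G_m$ for some $m$. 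Fixing three lines forming a triangle, i.e.\ meeting pairwise in three distinct points $a,b,c$, the three sets $\{0,a,b\}$, $\{0,a,c\}$, $\{0,b,c\}$ have \emph{all} of their pairwise sums equal to the single set $G_{a+b+c}$; applying $f$ yields $3$-element subsets $P,Q,R$ of the respective subgroups satisfying $P+Q=P+R=Q+R=f(G_{a+b+c})=G_m$ for one common $m$. A short inspection of the three ``missing-element'' tables of these products shows that the only solution is $P=\{0,a,b\}$, $Q=\{0,a,c\}$, $R=\{0,b,c\}$ (incidentally forcing $m=a+b+c$). Since any pair of nonzero elements $a,b$ extends to such a triangle by choosing any $c\notin\langle a,b\rangle$, every $3$-element set is fixed, and Proposition \ref{core} then yields $f=\id$.

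The main obstacle is this last step: ruling out the $S_3$-worth of a priori possibilities for the action of $f$ on the $3$-element subsets of each order-$4$ subgroup $\simeq C_2^2$. This is exactly the point where the exceptional behaviour of the Klein four-group must be confronted, and the finite verification that the three complement-relations admit a unique common solution is the heart of the argument; the triangle furnishes precisely enough coupling between the otherwise independent $C_2^2$-subgroups to collapse each local $S_3$-ambiguity.
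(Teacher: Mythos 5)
Your proof is correct, and its key step takes a genuinely different route from the paper's. The reduction is the same in both: conditions (\textbf{A}) and (\textbf{B}) of Proposition \ref{core} come down to showing $f(X)=X$ for every $3$-element set $X$, and Example \ref{example} (applied inside each order-$4$ subgroup) confines $f(\{0,a,b\})$ to the three $3$-element subsets of $\langle a,b\rangle$. From there the paper breaks the local $S_3$-ambiguity using $4$-element sets: for a basis $\{a,b,c\}$ it shows that $f(\{0,a,b,c\})$ is again a $4$-element set whose nonzero elements sum to $a+b+c$ (via the characterization $\{0,d\}+\{0,a,b,c\}=G$ iff $d=a+b+c$), deduces $f(G_{a+b+c})=G_{a+b+c}$ and, rerunning this for the basis $\{a,b+c,c\}$, gets $f(G_{a+b})=G_{a+b}$; the divisibility $X+(G\setminus\{a,b,a+b\})=G_{a+b}$ then forces $a+b\notin f(X)$, hence $f(X)=X$. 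You instead couple three order-$4$ subgroups directly through the identity $\{0,x,y\}+\{0,x,z\}=G_{x+y+z}$: the sets $\{0,a,b\},\{0,a,c\},\{0,b,c\}$ have all pairwise sums equal to $G_{a+b+c}$, so their images $P,Q,R$ must have pairwise-equal sums, and a finite check over the $27$ possible assignments leaves only the identity one. You assert this check rather than carry it out, but it does hold: computing the missing element of each pairwise sum (e.g.\ $\{0,a,a+b\}+\{0,a,a+c\}=G_{a+b+c}$ while $\{0,a,a+b\}+\{0,b,b+c\}=G_{c}$, which kills the fully twisted assignment, and similarly in the mixed cases) one finds the three tables agree only at the untwisted triple, forcing $m=a+b+c$. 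Your route avoids $4$-element sets, Lemma \ref{prelim}(3), and the divisibility trick entirely, at the price of an explicit enumeration; the paper's route replaces the enumeration with a structural argument that is longer to set up but requires no case analysis. Both proofs exploit the same phenomenon: a triangle of $C_2^2$-subgroups supplies exactly the coupling needed to collapse the local ambiguities that make $C_2^2$ itself exceptional.
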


\begin{proof}
    We aim to verify that the conditions in the statement of Proposition \ref{core} are satisfied. For this, we claim that it suffices to show that $f(X)=X$ for every $X\in \Pf(G)$ with $|X|=3$. 
    Indeed, let $H$ be a proper subgroup of $G$. If $H$ is trivial or isomorphic to $C_2$, then $f$ clearly restricts to the identity on $\Pf(H)$, due to the fact that $f$ has trivial pullback. If $H\simeq C_2^2$, then by Lemma \ref{prelim} (1), we have $f(H)=H$, whence in order to verify condition (\textbf{A}) in the statement of Proposition \ref{core}, it is enough to show that $f(X)=X$ for every 3-element set $X$. \\
    
    Moreover, to verify condition (\textbf{B}), we need to show that $f_{G/H}$ is the identity for every subgroup $H$ of $G$ of order 2 (note that $G/H\simeq C_2^2$). By Remark \ref{quotient}, $f_{G/H}$ has trivial pullback and by Lemma \ref{prelim} (1), we have $f_{G/H}(G/H)=G/H$. Let $Y=\{H,a+H,b+H\}\in\Pf(G/H)$ be a set containing three elements and let $Z=\{0,a,b\}\in\Pf(G)$. Then by the definition of $f_{G/H}$, it is evident that $f_{G/H}(Y)=Y$ if $f(Z)=Z$, whence it suffices again to show that $f(X)=X$ for every set $X\in\Pf(G)$ with $|X|=3$. \\

    Let $X\in \Pf(G)$ be a set containing three elements and write $X=\{0,a,b\}$. Then $X$ is contained in the subgroup $H$ of $G$ of order 4, generated by $a$ and $b$, whence by Lemma \ref{prelim} (1), we have $f(X)\subsetneq H$. Moreover, since $H\simeq C_2^2$, we see by Example \ref{example}, that 
    
    \begin{equation} \tag{1.1}
        f(X)\in\{X,\{0,a,a+b\},\{0,b,a+b\}\},
    \end{equation}
    from which we can also conclude that $f$ bijectively maps 3-element sets to 3-element sets. 

    Take an element $c\in G$ such that $\{a,b,c\}$ is a basis of $G$ and set $Y:=\{0,a,b,c\}$. Since $|Y|=4$, we first obtain $|f(Y)|\geq 4$. Moreover, by Lemma \ref{prelim} (1), since $Y$ is not a subgroup of $G$, $f(Y)$ cannot be a subgroup of $G$, whence it contains a basis of $G$. Note that \[2Y=Y+Y=G_{a+b+c},\] from which it follows that $2f(Y)$ is a 7-element set, using Lemma \ref{prelim} (3). Since it is easy to verify that $2f(Y)=G$ if $|Y|\geq 5$, we infer that $f(Y)$ is a 4-element set, containing a basis of $G$. Write $f(Y)=\{0,x,y,z\}$ and note that for a nonzero $d\in G$, the equation \[\{0,d\}+Y=G\] is satisfied if and only if $d=a+b+c$. Since $f$ has trivial pullback and $f(G)=G$, we conclude that \[\{0,d\}+f(Y)=G\] if and only if $d=a+b+c=x+y+z$. 
    Hence \[f(G_d)=f(G_{a+b+c})=f(2Y)=2f(Y)=G_{x+y+z}=G_d.\] Repeating the same argument for the basis $\{a,b+c,c\}$ of $G$ and noting that $a+(b+c)+c=a+b$, we obtain $f(G_{a+b})=G_{a+b}$. Recalling that $X=\{0,a,b\},$ the equation \[X+(G\setminus\{a,b,a+b\})=G_{a+b},\] implies that $f(X)$ divides $f(G_{a+b})=G_{a+b}$, from which it follows that $a+b\not\in f(X)$ and by (1.1), that $f(X)=X$. Since $X$ was chosen to be an arbitrary 3-element set, we are done.
\end{proof}

\begin{lemma}\label{c2plusc2p}
    Let $G\simeq C_2\oplus C_{2p}$, where $p$ is a prime number and let $f$ be an automorphism of $\Pf(G)$ with trivial pullback. Then $f$ is the identity.
\end{lemma}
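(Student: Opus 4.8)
The plan is to verify the two hypotheses (\textbf{A}) and (\textbf{B}) of Proposition~\ref{core} and then invoke it. Write $G=\langle s\rangle\oplus\langle t\rangle$ with $\ord(s)=2$ and $\ord(t)=2p$, and set $H_0:=G[2]=\{0,s,pt,s+pt\}$, the $2$-torsion subgroup, which is the unique subgroup of $G$ isomorphic to $C_2^2$. A direct inspection of the subgroup lattice shows that $H_0$ is the \emph{only} non-cyclic proper subgroup of $G$, and that among the prime-order subgroups of $G$ there is exactly one, namely $H^\ast:=\langle 2t\rangle$ (of order $p$ when $p$ is odd and of order $2$ when $p=2$), whose quotient satisfies $G/H^\ast\simeq C_2^2$; every other proper subgroup and every other prime-order quotient is cyclic. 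By Lemma~\ref{prelim}(1) together with Corollary~\ref{cyclic}, the restriction of $f$ to $\Pf(H)$ is the identity for each cyclic proper subgroup $H$, and by Lemma~\ref{prelim}(2), Remark~\ref{quotient} and Corollary~\ref{cyclic}, we have $f_{G/H}=\id$ whenever $G/H$ is cyclic. Hence only two things remain: condition (\textbf{A}) for $H_0$, i.e.\ that $f$ fixes each of the three $3$-element subsets of $H_0$, and condition (\textbf{B}) for $H^\ast$, i.e.\ that $f_{G/H^\ast}=\id$.

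For condition (\textbf{A}), the crucial step is to show that $f(G_d)=G_d$ for the two order-$2$ elements $d=s$ and $d=s+pt$ lying outside $\langle t\rangle$. This I would do not through Lemma~\ref{prelim}(3) (which for order-$2$ elements only yields $f(G_d)=G_{d'}$ with $\ord(d')=2$), but by exhibiting a factorization of $G_d$ into a sum of two sets each contained in a proper cyclic subgroup. Concretely, for $g_0\in G\setminus\langle t\rangle$ with $2g_0\neq 0$ and any $x\in\langle t\rangle$ one checks the identity
\[\{0,g_0,2g_0\}+\bigl(\langle t\rangle\setminus\{x\}\bigr)=G\setminus\{g_0+x\},\]
since $2g_0\in\langle t\rangle$ forces both cosets of $\langle t\rangle$ to be covered with a single defect $g_0+x$. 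Taking $g_0=s+t$ (so that $\{0,g_0,2g_0\}=\{0,s+t,2t\}\subseteq\langle s+t\rangle$) and $x=-t$ realizes the defect $s$, while $x=(p-1)t$ realizes the defect $s+pt$. As both factors lie in the proper cyclic subgroups $\langle s+t\rangle$ and $\langle t\rangle$, they are fixed by $f$ (Corollary~\ref{cyclic}), whence $f(G_d)=G_d$ for $d\in\{s,s+pt\}$.

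To conclude (\textbf{A}), note that any $3$-element subset $X=\{0,c_1,c_2\}$ of $H_0$ consists of $2$-torsion elements, so $-X=X$ and the translation-stabilizer of $X$ in $G$ is trivial; setting $d:=c_1+c_2$ (the third order-$2$ element) one gets $X+\bigl(G\setminus(d+X)\bigr)=G_d$, i.e.\ $X$ divides $G_d$. Applying $f$ and using $f(G_d)=G_d$ for $d\in\{s,s+pt\}$ gives $d\notin f(X)$; since $f$ restricts to a cardinality-preserving automorphism of $\Pf(H_0)$ by Lemma~\ref{prelim}(1) and Example~\ref{example}, $f(X)$ is again a $3$-element subset of $H_0$, and the only such set avoiding $d$ is $X$ itself. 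This pins down $f$ on the two subsets $\{0,pt,s+pt\}$ and $\{0,s,pt\}$; as $f$ permutes the three $3$-element subsets of $H_0$, the third is fixed automatically, proving (\textbf{A}).

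It remains to establish (\textbf{B}), namely $f_{G/H^\ast}=\id$, and this is where the two cases genuinely diverge. When $p$ is odd, $G=H_0\oplus H^\ast$ and the projection $\pi\colon G\to G/H^\ast$ restricts to an isomorphism $H_0\xrightarrow{\sim}G/H^\ast$; a short computation gives $f_{G/H^\ast}(\pi(X))=\pi(f(X))$ for $X\subseteq H_0$, so (\textbf{A}) shows that $f_{G/H^\ast}$ fixes every $3$-element subset of $G/H^\ast\simeq C_2^2$, and together with its trivial pullback (Remark~\ref{quotient}) and Example~\ref{example} this forces $f_{G/H^\ast}=\id$. The case $p=2$ is the main obstacle: here $H^\ast=\langle 2t\rangle\subset H_0$, so the quotient is \emph{not} controlled by the subgroup $H_0$, and one must argue directly that $f$ fixes at least two of the three $6$-element, $H^\ast$-divisible sets $G\setminus(c+H^\ast)$; since the image of such a set under $\pi$ is a $3$-element subset of $G/H^\ast$, this fixes two of the three $3$-element subsets of $G/H^\ast$ and hence (as before) all three. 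To do this I would show that $f$ fixes the corresponding transversal $3$-element sets such as $\{0,s,t\}$: using $f(G_a)=G_a$ for the order-$4$ elements $a$ (Lemma~\ref{prelim}(3)) and $f(G_{s+2t})=G_{s+2t}$ from the factorization above, one forces $f(\{0,s,t\})\subseteq\{0,s,t,2t\}$; then the identity $3\{0,s,t\}=G_{s+3t}$, combined with $f(G_{s+3t})=G_{s+3t}$ and the injectivity of $f$, leaves $f(\{0,s,t\})=\{0,s,t\}$ as the only possibility. With (\textbf{A}) and (\textbf{B}) in hand, Proposition~\ref{core} yields that $f$ is the identity.
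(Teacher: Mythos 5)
Your proof is correct, and its skeleton is the paper's: both verify conditions (\textbf{A}) and (\textbf{B}) of Proposition~\ref{core}, reducing (\textbf{A}) to the unique non-cyclic proper subgroup $H_0$ and (\textbf{B}) to the unique prime-order subgroup $\langle 2t\rangle$ with non-cyclic quotient, handling odd $p$ by the transversal observation and $p=2$ via Lemma~\ref{prelim}(3) applied to relations of the form $3X=G_c$ with $\ord(c)=4$. The places where you genuinely diverge are tactical. First, to show $f(G_d)=G_d$ for order-two elements $d$, the paper uses two separate identities, $\{0,a+b\}+\{0,a-b\}+(M\setminus\{pb\})=G_{pb}$ and $2\{0,a+b\}+(M\setminus\{-b\})=G_a$ with $M=\langle b\rangle$ (so $d\in\{pb,a\}$), while you use the single uniform identity $\{0,s+t,2t\}+(\langle t\rangle\setminus\{x\})=G\setminus\{s+t+x\}$, whose two summands lie in proper cyclic subgroups, to get $d\in\{s,s+pt\}$; your identity is cleaner, and, as you correctly argue, either pair of order-two elements suffices to pin down all three $3$-element subsets of $H_0$. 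Second, for $p=2$ the paper runs a short contradiction at the quotient level --- if $f_{G/K}$ moved $\{K,a+K,b+K\}$ then $f(\{0,a,b\})$ would meet $\{a+b,a+3b\}$, contradicting $3f(\{0,a,b\})=G_{a+3b}$ --- so it needs only one relation per set, whereas you determine $f(\{0,s,t\})$ outright by excluding $3t$, $s+t$, $s+2t$, $s+3t$ through divisibility of $\{0,s,t\}$ into $G_{3t}$, $G_{s+t}$, $G_{s+2t}$, $G_{s+3t}$ and then invoking $3\{0,s,t\}=G_{s+3t}$; this costs four divisibility verifications (which you assert but do not write out --- they all do hold) but stays entirely inside $\Pf(G)$ rather than passing back and forth through $f_{G/H^\ast}$. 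One point to finish cleanly: as in the paper, a second transversal set must be treated for $p=2$ (e.g.\ $\{0,s,s+t\}$, using $3\{0,s,s+t\}=G_{3t}$); you only allude to this (``at least two of the three''), but the argument repeats verbatim, so this is routine detail rather than a gap.
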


\begin{proof}
   Let $\{a,b\}$ be a basis of $G$ with $\ord(a)=2$ and $\ord(b)=2p$. Again, we will show that the conditions in the statement of Proposition \ref{core} are satisfied. By using Lemma \ref{prelim} (1) and Corollary \ref{cyclic}, in order to verify condition (\textbf{A}), it is enough to show that $f$ restricts to the identity on $\Pf(H)$, where $H=\{0,a,pb,a+pb\}$, since $H$ is the only proper subgroup of $G$ that is not cyclic. Moreover, the cyclic subgroup $K$ of $G$, generated by $2b$, is the only subgroup of $G$ of prime order such that $G/K$ is not cyclic. Hence, by using Remark \ref{quotient} and Corollary \ref{cyclic}, it is enough to show that $f_{G/K}$ is the identity to conclude that condition (\textbf{B}) is satisfied. \\

   We start by showing that $f$ restricts to the identity on $\Pf(H)$. Since $f$ has trivial pullback and $f(H)=H$ by Lemma \ref{prelim} (1), it is enough to prove that $f(X)=X$, where $X$ is a 3-element subset of $H$. Moreover, due to the fact that $f(X)$ is a 3-element subset of $H$, it is enough to show that $f(\{0,a,a+pb\})=\{0,a,a+pb\}$ and $f(\{0,pb,a+pb\})=\{0,pb,a+pb\}$. \\

   Let $M$ be the cyclic subgroup of $G$, generated by $b$ and set $Y:=M/\{pb\}$. Then, since $f$ restricts to the identity on $\Pf(M)$, we have $f(Y)=Y$. From \[\{0,a+b\}+\{0,a-b\}+Y=G_{pb}\] and the fact that $f$ has trivial pullback, we then gather that $f(G_{pb})=G_{pb}$. Similarly, if $Z:=M\setminus \{-b\}$, then it follows from \[2\{0,a+b\}+Z=\{0,a+b,2b\}+Z=G_a\] that $f(G_a)=G_a$. To conclude, the equations \[\{0,a,a+pb\}+(G\setminus\{a,pb,a+pb\})=G_{pb}\] and \[\{0,pb,a+pb\}+(G\setminus\{a,pb,a+pb\})=G_{a}\] imply that $f(\{0,a,a+pb\})$ and $f(\{0,pb,a+pb\})$ divide $G_{pb}$ and $G_a$ respectively. Hence $pb\not\in f(\{0,a,a+pb\})$ and thus \[f(\{0,a,a+pb\})=\{0,a,a+pb\}.\] Similarly, we obtain \[f(\{0,pb,a+pb\})=\{0,pb,a+pb\}\] and we have verified that condition (\textbf{A}) is satisfied.  \\

    Let $K$ be the subgroup of $G$, generated by $2b$. By Lemma \ref{prelim} (1) and Remark \ref{quotient}, it suffices to show that $f_{G/K}$ is the identity on 3-element sets. We note that if $p$ is an odd prime number, then $H=\{0,a,pb,a+pb\}$ is a full set of representatives of the cosets in $G/K$. Since we have shown that $f$ restricts to the identity on $\Pf(H)$, it follows that $f_{G/K}$ is the identity as well. 
    
    Let now $p=2$. Then $K$ is the cyclic subgroup of $G$ of order 2, generated by $2b$. We will first show that \[f_{G/K}(\{H,a+H,b+H\})=\{H,a+H,b+H\}.\] Suppose towards a contradiction, that this is not the case and set $X:=\{0,a,b\}\in\Pf(G)$. Then \[(a+b)+H\in f_{G/K}(\{H,a+H,b+H\}),\] whence \[f(X)\cap\{a+b,a+3b\}\ne \emptyset.\] However, by using Lemma \ref{prelim} (3) and the fact that $\ord(a+3b)=4$, we infer from the equation \[3X=G_{a+3b}\] that  \[3f(X)=G_{a+3b},\] which is clearly impossible if $a+b\in f(X)$ or $a+3b\in f(X)$. 
    In a similar fashion, we show that \[f_{G/K}(\{H,a+H,(a+b)+H\})=\{H,a+H,(a+b)+H\},\] which then implies that \[f_{G/K}(\{H,b+H,(a+b)+H\})=\{H,b+H,(a+b)+H\}.\] Suppose by way of contradiction that \[b+H\in f_{G/K}(\{H,a+H,(a+b)+H\})\] and set $Y:=\{0,a,a+b\}$. Then \[f(Y)\cap \{b,3b\}\ne\emptyset.\] However, by Lemma \ref{prelim} (3), the fact that $\ord(3b)=4$ and the equation \[3Y=G_{3b},\] we see that both $b\in Y$ and $3b\in Y$ are impossible. Hence $f_{G/K}$ is the identity, which finishes our proof.
\end{proof}

We are now ready to prove our main theorem.

\begin{theorem}\label{thm}
    Let $G$ be a finite abelian group with $G\not\simeq C_2^2$. Then $\aut(\Pf(G))\simeq \aut(G)$.
\end{theorem}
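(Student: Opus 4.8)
The plan is to prove the theorem by establishing that the canonical embedding $\aut(G)\hookrightarrow\aut(\Pf(G))$ is surjective. By Proposition~\ref{pulliso}, every automorphism $f$ of $\Pf(G)$ has a pullback $g$ that is itself an automorphism of $G$. The key reduction is to consider the composite $f\circ F_{g^{-1}}$, where $F_{g^{-1}}$ is the augmentation of $g^{-1}$: this composite is again an automorphism of $\Pf(G)$, and a direct computation of its pullback shows it is the identity, so $f\circ F_{g^{-1}}$ has trivial pullback. Thus it suffices to show that every automorphism of $\Pf(G)$ with trivial pullback is the identity; once that is done, $f=F_g$ is the augmentation of $g$, and the embedding is an isomorphism.

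To prove that trivial pullback forces $f=\id$, I would argue by induction on $|G|$, aiming to apply Proposition~\ref{core}. The base cases must include all the exceptional groups flagged in the text: the groups of prime order (for which the conditions of Proposition~\ref{core} hold vacuously, as in Corollary~\ref{cyclic}), together with $G\simeq C_2^3$ and $G\simeq C_2\oplus C_{2p}$, handled by Lemmas~\ref{c2^3} and \ref{c2plusc2p}. These latter two are precisely the groups having a copy of $C_2^2$ as a maximal proper subgroup, so including them lets the induction step avoid ever needing to verify condition~(\textbf{A}) or~(\textbf{B}) for the genuinely exceptional subquotient $C_2^2$.

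For the induction step, assume $G\not\simeq C_2^2$ is not one of the base cases, and let $f$ be an automorphism of $\Pf(G)$ with trivial pullback. I would verify the two hypotheses of Proposition~\ref{core}. For condition~(\textbf{A}): given a proper subgroup $H\leqslant G$, Lemma~\ref{prelim}(1) says $f$ restricts to an automorphism of $\Pf(H)$ with trivial pullback; since $|H|<|G|$, the induction hypothesis applies provided $H\not\simeq C_2^2$, and the case $H\simeq C_2^2$ is exactly what the enlarged base case was designed to exclude from appearing as a \emph{maximal} proper subgroup. For condition~(\textbf{B}): given a subgroup $H$ of prime order, Remark~\ref{quotient} gives that $f_{G/H}$ is an automorphism of $\Pf(G/H)$ with trivial pullback, and since $|G/H|<|G|$, the induction hypothesis yields $f_{G/H}=\id$ — again modulo checking $G/H\not\simeq C_2^2$. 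Having verified both conditions, Proposition~\ref{core} gives $f=\id$.

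The main obstacle is the careful bookkeeping of which groups $G$ can have $C_2^2$ arising as a maximal proper subgroup or as a quotient $G/H$ isomorphic to $C_2^2$, so that the induction never recurses into the forbidden case without a base case covering it. The text's identification of $C_2^3$ and $C_2\oplus C_{2p}$ as exactly the groups containing $C_2^2$ as a maximal proper subgroup is the crucial combinatorial input here; I would need to confirm that for every remaining $G$, each proper subgroup isomorphic to $C_2^2$ is \emph{non-maximal} (hence sits inside a strictly larger proper subgroup to which the induction still applies), and that every prime-order quotient situation leading to $G/H\simeq C_2^2$ is likewise absorbed by the base cases. Once this case analysis is secured, the argument closes cleanly.
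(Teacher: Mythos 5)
Your proposal is correct and follows essentially the same route as the paper: reduce to trivial pullback via Proposition~\ref{pulliso} and the augmentation, then induct on $|G|$ using Proposition~\ref{core}, with the base cases $C_2^3$ and $C_2\oplus C_{2p}$ ensuring that any subgroup isomorphic to $C_2^2$ is non-maximal and that no prime-order quotient is isomorphic to $C_2^2$ (the latter via the $|G|=4p$ analysis). The only cosmetic difference is that the paper disposes of all cyclic groups at once through Corollary~\ref{cyclic}, whereas you take only prime-order groups as base cases and absorb composite cyclic groups into the induction step, which works equally well since subgroups and quotients of cyclic groups are cyclic.
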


\begin{proof}
    Let $f$ be an automorphism of $\Pf(G)$ and let $g:G\to G$ be the pullback of $f$. By Proposition \ref{pulliso}, $g$ is an automorphism of $G$ and we let $F_{g^{-1}}$ denote the augmentation of $g^{-1}$. Since by the remark in the beginning of this section, showing that $\aut(\Pf(G))\simeq \aut(G)$ amounts to proving that $f$ is the augmentation of its pullback, or equivalently, that $F_{g^{-1}}\circ f$ is the identity, we can assume without loss of generality that $f$ has trivial pullback. \\

     By Corollary \ref{cyclic}, we can assume that $G$ is not cyclic. Additionally, by Lemmas \ref{c2^3} and \ref{c2plusc2p}, we may also assume that $G$ is isomorphic to neither $C_2^3$ nor $C_2\oplus C_{2p}$, where $p$ is a prime number, which means that no maximal proper subgroup of $G$ is isomorphic to $C_2^2$. We proceed by induction on the order of $G$. \\
    
    By combining Lemma \ref{prelim} (1) and the induction hypothesis, we immediately see that for any proper subgroup $H$ of $G$, $f$ restricts to the identity on $\Pf(H)$, except possibly if $H\simeq C_2^2$. However, if this is the case, then by our assumption, $H$ is strictly contained in a proper subgroup $K$ of $G$. In particular, $K\not\simeq C_2^2$, whence $f$ restricts to the identity on $\Pf(K)$. As a consequence, $f$ restricts to the identity on $\Pf(H)$ as well. Moreover, if $H$ is a subgroup of $G$ of prime order $p$, then $G/H\not\simeq C_2^2$, since this would imply that $|G|=4p$, from which it follows that $G\simeq C_2^3, G\simeq C_2\oplus C_{2p}$ or $G\simeq C_{4p}$, neither of which is possible by our assumptions. 
    Hence, if $H$ is a subgroup of $G$ of prime order, then by Remark \ref{quotient}, the induced automorphism $f_{G/H}$ has trivial pullback and by our induction hypothesis, it is the identity. In conclusion, both conditions in the statement of Proposition \ref{core} are satsified and $f$ is the identity.
\end{proof}

\end{document}